\def\Qed{\ifhmode\unskip\nobreak\fi\quad 
  \ifmmode\square\else$\square$\fi} 
\newtheorem{theorem}{Theorem}[section]
\newtheorem{lemma}{Lemma}[section]
\numberwithin{equation}{section}
\newcounter{remark}[section]
\newenvironment{remark}
{\refstepcounter{remark}\medskip\noindent{\sc Remark\ \thesection.\theremark:}}{\medskip}
\newcounter{alphatheo}
\renewcommand\thealphatheo{\Alph{alphatheo}}
\newenvironment{alphatheo}{\refstepcounter{alphatheo}\medskip\noindent{\bf Theorem \thealphatheo.} \it }{\medskip}
\newcounter{example}
\renewenvironment{proof}{\medskip\noindent{\sc Proof:}}{\medskip}
\newcommand{\hel}
{
\hskip2.5pt{\vrule height8pt width.5pt depth0pt}
\hskip-.2pt\vbox{\hrule height.5pt width8pt depth0pt}
\,
}
\newcommand{\R}{\mathbb R}
\newcommand{\N}{\mathbb N}
\def\calL{{\mathcal L}}
\def\<{\langle}
\def\>{\rangle}
\def\calD{{\mathcal D}}
\def\calH{{\mathcal H}}
\DeclareMathOperator*{\diver}{div}
\begin{document}

\title[Hausdorff dimension of removable sets
\\for elliptic and canceling operators]{Hausdorff dimension of removable sets
\\for elliptic and canceling homogeneous differential operators\\
in the class of bounded functions}

\author {V. Biliatto}
\address{Departamento de Matem\'atica, Universidade Federal de S\~ao Carlos, S\~ao Carlos, SP,
13565-905, Brazil}
\email{{vbiliatto@ufscar.br}}

\author {L. Moonens}
\address{
Laboratoire de Math\'ematiques d'Orsay UMR~8628, B\^atiment 307 (IMO), Universit\'e Paris-Saclay,  F-91405 Orsay, France}
\email{Laurent.Moonens@universite-paris-saclay.fr}

\author {T. Picon}
\address{Departamento de Computa\c{c}\~ao e Matem\'atica, Universidade de S\~ao Paulo, Ribeir\~ao Preto, SP, 14040-901, Brazil}
\email{picon@ffclrp.usp.br}

\thanks{The first author was supported by Coordena\c{c}\~ao de Aperfei\c{c}oamento de Pessoal de N\'ivel Superior (CAPES - grant 88882.441243/2019-01) and the third by Conselho Nacional de Desenvolvimento Cient\'ifico e Tecnol\'ogico (CNPq - grant 311430/2018-0) and Funda\c{c}\~ao de Amparo \`a  Pesquisa do Estado de S\~ao Paulo (FAPESP - grant 18/15484-7). All three authors also acknowledge the support of the Brazilian-French Network in Mathematics (GDRI-RFBM).}

\subjclass[2010]{47F05 35A23 35B45 35J48 28A12 26B20;  }

\keywords{divergence-measure vector fields, removable sets, Froostman`s Lemma, canceling operators.}

\begin{abstract}
In this note we give an upper bound on the Hausdorff dimension of removable sets
for elliptic and canceling homogeneous differential operators with constant coefficients 
in the class of bounded functions, using a simple extension of Frostman's lemma in Euclidean space with an additional power decay.
\end{abstract}

\maketitle



\section{Introduction}

Given a linear differential operator $P(x,D)$ with smooth coefficients in $\R^n$, $n\geq 2$, one calls a closed set $S\subseteq\R^n$ removable for the equation $P(x,D)f=0$ with respect to a space $\mathcal{F}$ of locally integrable functions (scalar or vector-valued), provided that for any $f\in \mathcal{F}$ satisfying (in the sense of distributions) the equation $P(x,D)f=0$ outside $S$, one has $P(x,D)f=0$ in $\R^n$ (in the sense of distributions). The nomenclature ``removable'' is hence self-explanatory.

The following result dates back to Harvey and Polking \cite[Theorem~4.1, (b)]{HP}, where $\calH^s$ will stand for the $s$-dimensional Hausdorff (outer) measure in $\R^n$.
\begin{theorem}\label{thm.HP}
If $P(x,D)$ is a linear differential operator of order $m<n$ with smooth coefficients and if the closed set $S\subseteq\R^n$ satisfies $\calH^{n-m}(S)=0$, then $S$ is removable for the equation $P(x,D)f=0$ with respect to the space $L^\infty_{loc}(\R^n)$ of locally (essentially) bounded functions.
\end{theorem}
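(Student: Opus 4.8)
The plan is to reduce the removability statement to a test-function computation and to manufacture smooth cut-offs adapted to $S$. Write $P^{t}$ for the formal transpose of $P(x,D)$; since the coefficients are smooth and $P(x,D)$ has order $m$, $P^{t}$ has order $\le m$, and for $f\in L^{\infty}_{loc}(\R^{n})$ the conclusion ``$P(x,D)f=0$ in $\D'(\R^{n})$'' means $\langle f,P^{t}\varphi\rangle=0$ for every $\varphi\in\ccinf(\R^{n})$. Fix one such $\varphi$, set $K=\supp\varphi$, and note that $S\cap K$ is compact with $\calH^{n-m}(S\cap K)=0$. The crux is the following cut-off lemma: for every $\delta>0$ there is $\psi_{\delta}\in\ccinf(\R^{n})$ with $0\le\psi_{\delta}\le1$, with $\psi_{\delta}\equiv1$ on some open neighbourhood of $S\cap K$, and with $\sum_{|\alpha|\le m}\|D^{\alpha}\psi_{\delta}\|_{L^{1}(\R^{n})}\le\delta$.

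To prove the lemma I would use the interplay between Hausdorff measure and Sobolev capacity. Since $\calH^{n-m}(S\cap K)=0$, one may cover the compact set $S\cap K$ by finitely many balls $B(x_{j},r_{j})$ with $r_{j}<1$ and $\sum_{j}r_{j}^{\,n-m}$ arbitrarily small. For a single ball, rescaling a fixed bump profile provides $\eta_{j}\in\ccinf$ with $0\le\eta_{j}\le1$, $\eta_{j}\equiv1$ on $B(x_{j},r_{j})$, $\supp\eta_{j}\subseteq B(x_{j},2r_{j})$, and $\|D^{\alpha}\eta_{j}\|_{L^{1}}\le C(n,\alpha)\,r_{j}^{\,n-|\alpha|}$, so that $\sum_{|\alpha|\le m}\|D^{\alpha}\eta_{j}\|_{L^{1}}\le C(n,m)\,r_{j}^{\,n-m}$ (using $r_{j}<1$ and $|\alpha|\le m$). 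One then assembles the $\eta_{j}$ into a single $\psi_{\delta}$ equal to $1$ on a neighbourhood of $\bigcup_{j}B(x_{j},r_{j})\supseteq S\cap K$ whose $W^{m,1}$-norm is controlled by $C(n,m)\sum_{j}r_{j}^{\,n-m}$, which we have arranged to be $\le\delta$. This assembling step is the one I expect to demand real care: the dilated balls $B(x_{j},2r_{j})$ may overlap without any uniform bound — small balls can cluster near a point together with much larger ones — so a crude product or sum of the $\eta_{j}$ need not keep the higher-order derivatives small. The clean way around this is the countable subadditivity of the Sobolev $(m,1)$-capacity, i.e.\ the estimate $\mathrm{Cap}_{m,1}(S\cap K)\lesssim\calH^{n-m}_{\infty}(S\cap K)=0$ (Hausdorff content), after which cut-offs of arbitrarily small $W^{m,1}$-norm exist by the very definition of vanishing capacity. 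For $m=1$ the assembling is elementary: $\psi_{\delta}$ can be taken to be a mollification of $\max_{j}\eta_{j}$, since $|\nabla\max_{j}\eta_{j}|\le\sum_{j}|\nabla\eta_{j}|$ pointwise.

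Granting the lemma, split $\varphi=(1-\psi_{\delta})\varphi+\psi_{\delta}\varphi$, so $\langle f,P^{t}\varphi\rangle=\langle f,P^{t}((1-\psi_{\delta})\varphi)\rangle+\langle f,P^{t}(\psi_{\delta}\varphi)\rangle$. Since $\psi_{\delta}\equiv1$ near $S\cap K$ and $\supp\varphi=K$, the function $(1-\psi_{\delta})\varphi$ is smooth, compactly supported, and vanishes on a neighbourhood of $S$, hence $(1-\psi_{\delta})\varphi\in\ccinf(\R^{n}\setminus S)$; the hypothesis that $P(x,D)f=0$ outside $S$ then gives $\langle f,P^{t}((1-\psi_{\delta})\varphi)\rangle=0$. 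For the second term, $\supp(\psi_{\delta}\varphi)\subseteq K$ and $f\in L^{\infty}(K)$, so $|\langle f,P^{t}(\psi_{\delta}\varphi)\rangle|\le\|f\|_{L^{\infty}(K)}\|P^{t}(\psi_{\delta}\varphi)\|_{L^{1}}$; expanding $P^{t}(\psi_{\delta}\varphi)$ by the Leibniz rule and bounding on $K$ the smooth coefficients of $P^{t}$ together with the derivatives of $\varphi$ yields $\|P^{t}(\psi_{\delta}\varphi)\|_{L^{1}}\le C(P,\varphi)\sum_{|\alpha|\le m}\|D^{\alpha}\psi_{\delta}\|_{L^{1}}\le C(P,\varphi)\,\delta$. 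Hence $|\langle f,P^{t}\varphi\rangle|\le C(P,\varphi,f)\,\delta$ for every $\delta>0$; letting $\delta\to0$ gives $\langle f,P^{t}\varphi\rangle=0$, and as $\varphi\in\ccinf(\R^{n})$ was arbitrary, $P(x,D)f=0$ in $\D'(\R^{n})$, i.e.\ $S$ is removable. Everything rests on the cut-off lemma, and within it on converting $\calH^{n-m}$-smallness of a covering family into $W^{m,1}$-smallness of a single cut-off — which is precisely where the (sub)additivity of capacity, rather than a naive overlap count, is needed.
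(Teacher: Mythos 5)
First, note that the paper does not prove this statement at all: it is quoted verbatim from Harvey and Polking \cite[Theorem~4.1(b)]{HP}, so there is no internal proof to compare against. Judged on its own, your architecture is exactly the classical one: reduce to $\langle f,P^t\varphi\rangle=0$, split $\varphi$ with a cut-off $\psi_\delta$ that is identically $1$ near $S\cap\supp\varphi$ and has small $W^{m,1}$-norm, kill the first term using the hypothesis off $S$ and the second by $L^\infty$--$L^1$ duality plus Leibniz. That outer layer of your argument is correct, and for $m=1$ your proof is complete (the mollified $\max_j\eta_j$, or equivalently $1-\prod_j(1-\eta_j)$, does the job).

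The genuine gap is in the cut-off lemma for $m\ge 2$, precisely at the spot you flag and then wave at with ``subadditivity of the Sobolev $(m,1)$-capacity.'' That phrase conflates two different capacities. The capacity for which subadditivity is trivial is the one defined by $\inf\{\|u\|_{W^{m,1}}: u\in\ccinf,\ u\ge 1\text{ near }E\}$, where you may simply take $u=\sum_j\eta_j$; but a function with $u\ge 1$ near $E$ is \emph{not} a cut-off, and your main argument genuinely needs $\psi_\delta\equiv 1$ on a neighbourhood of $S\cap K$ so that $(1-\psi_\delta)\varphi$ vanishes near $S$. Converting $u$ into such a $\psi_\delta$ means composing with a truncation $H$ ($H\equiv 1$ on $[1,\infty)$, $H\equiv 0$ near $0$), and the chain rule produces terms $H^{(k)}(u)\,D^{\beta_1}u\cdots D^{\beta_k}u$ whose $L^1$-norms are not controlled by $\|u\|_{W^{m,1}}$ alone when $m\ge 2$ (one would need, e.g., an a priori bound $\|u\|_{L^\infty}\le C$ together with Gagliardo--Nirenberg, and $\sum_j\eta_j$ has no uniform $L^\infty$ bound when the dilated balls cluster). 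Conversely, if you define the capacity directly with the constraints $0\le\psi\le 1$ and $\psi\equiv 1$ near $E$, its subadditivity is no longer free: summing leaves the $[0,1]$ range, and the product $1-(1-\psi_1)(1-\psi_2)$ reintroduces exactly the cross-term problem you identified. Closing this gap is the actual content of Harvey and Polking's lemma (equivalently, of Maz'ya's comparison of $\mathrm{Cap}_{m,1}$ with the Hausdorff content $\calH^{n-m}_\infty$), and it requires a further idea~---~e.g.\ a covering with bounded overlap at each dyadic scale and a careful multiscale assembly~---~rather than an appeal to subadditivity. So: right strategy, honest signposting of the difficulty, but the key lemma is asserted rather than proved for $m\ge 2$.
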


Removable sets for several linear equations have been studied, and sometimes characterized completely, in the literature. For the scalar Laplace equation, removable sets with respect to Lipschitz functions have been subject of a very deep study; for example, it follows from works by Calder\'on \cite{CALDERON}, David and Mattila ($n=2$) \cite{DM} and Nazarov, Tolsa and Volberg ($n>2$) \cite{NTV} that a compact set $S\subseteq\R^n$ satisfying $\calH^{n-1}(S)<+\infty$ is removable for the Laplace equation with respect to Lipschitz functions, if and only if it is purely $(n-1)$-unrectifiable, \emph{i.e.} if and only if the intersection of $S$ with any $(n-1)$-rectifiable set, is $\calH^{n-1}$-negligible; (un)rectifiability hence plays a fundamental role for determining whether or not a set is removable in this context. 

The situation is very different for the divergence equation with respect to bounded vector fields, even though the Laplace equation can be written $\Delta f =\diver(\nabla f)=0$ and $\nabla f$ is a bounded vector field for any Lipschitz function $f$ and hence every removable set for the divergence equation with respect to bounded vector fields is removable for the Laplace equation w.r.t. Lipschitz functions.

It was first proven by the second author in \cite{M2006} that a compact set $S\subseteq\R^n$ is removable for the equation $\diver f=0$ with respect to $L^\infty(\R^n,\R^n)$, if and only if one has $\calH^{n-1}(S)=0$. The proof there used a decomposition of $S$ into Borel subsets $S_1$ and $S_2$, one of which is $(n-1)$-rectifiable, the other one being purely $(n-1)$-unrectifiable, relying then on results by Th.~De Pauw \cite{DP} for purely $(n-1)$-unrectifiable sets, and on the fact that $(n-1)$-rectifiable sets of positive $(n-1)$-dimensional Hausdorff measure are not removable for the Laplace equation w.r.t. Lipschitz functions (and hence also not removable for the divergence equation w.r.t. bounded vector fields, as discussed just above). Obviously, such a proof heavily relies on the fact that one deals with the \emph{divergence} operator, and cannot be carried out to other differential operators (even of order one).

Shortly after, Phuc and Torres \cite{PT} obtained, among other results, a new proof of the abovementioned characterization of (compact) removable sets for the divergence equation w.r.t. bounded vector fields, this time relying on a new strategy to prove that a compact set $S\subseteq\R^n$ with $\calH^{n-1}(S)>0$, cannot be removable for the divergence equation. Exhibiting first, with use of Frostman's lemma \cite[Theorem~8.8]{MATTILA}, a non-trivial Radon measure $\mu$ supported in such a set $S$ enjoying that $\mu(B[x,r])\leq r^s$ for all $x\in\R^n$ and $r>0$, and using a boxing inequality together with a co-area formula, they obtain an inequality of the form:
$$
\left|\int_{\R^n} \varphi\,d\mu \right|\leq C\|\nabla\varphi\|_{L^1},
$$
for smooth test functions $\varphi$, implying in turn that $\mu$ is in the dual space $X^*$ of the space $X$ of test functions endowed with the norm $\|\nabla\varphi\|_{L^1}$. Then, since the operator
$
-\nabla : X\to L^1(\R^n,\R^n)
$
is (obviously) an isometry, it follows that the adjoint operator:
$$
\diver=(-\nabla)^*: L^\infty(\R^n,\R^n)=L^1(\R^n,\R^n)^*\to X^*
$$
is surjective, and hence that the equation $\diver f=\mu$ has a solution in $L^\infty(\R^n,\R^n)$. As a consequence, $S$ cannot be removable for the divergence equation w.r.t. $L^\infty$ since one has $\diver v=0$ outside $S$ (recall that $\mu$ is supported in $S$) but $\diver f=\mu\neq 0$ in $\R^n$. The very specific role of the gradient \emph{vs} divergence, arising through the co-area formula in the latter argument, suggests that it does not adapt easily to obtain removability results for other operators than the divergence operator.

Very recently, the first and third authors obtained in \cite{BP}, for a special class of elliptic homogeneous differential operators $A(D):{\calD(\R^n,E)}\rightarrow {\calD(\R^n,F)}$ of order $0<m<n$ in $\R^n$ (see the Section \ref{sec.haus-dim} for details), sufficient conditions on the Radon measure $\mu$ in order to obtain solvability results in Lebesgue spaces for the equation:
\begin{equation}\label{ad}
A^{*}(D)f=\mu,
\end{equation}
where $A^{*}(D)$ is the (formal) adjoint operator associated to the homogeneous {linear} differential operator $A(D)$. 
In particular, the following solvability result in $L^{\infty}$ for equation \eqref{ad} was obtained as \cite[Theorem~B, p.~3]{BP}.
\begin{theorem}\label{thm.BP}
Assume that $A(D)$ is a homogeneous linear differential operator on $\R^n$ of order $0<m<n$, from a finite-dimensional vector space $E$ to a finite-dimensional vector space $F$ and that $\mu$ is a (vector-valued) Radon measure in $\R^n$ with values in $E^*$. If $A(D)$ is elliptic and canceling, and if one has:
\begin{equation}\label{eq.BP-1}
\sup_{r>0}\frac{|\mu|(B[0,r])}{r^{n-m}}<+\infty,
\end{equation}
as well as, uniformly on $x\in\R^n$:
\begin{equation}\label{eq.BP-2}
\int_0^{\frac{|x|}{2}} \frac{|\mu|(B[x,r])}{r^{n-m+1}}\,dr\lesssim 1,
\end{equation}
then there exists $f\in L^\infty(\R^n,F^*)$ satisfying the equation \eqref{ad} in $\R^n$ in the sense of distributions.
\end{theorem}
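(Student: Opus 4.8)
The plan is to reduce the solvability of \eqref{ad} in $L^{\infty}$ to an a priori estimate, following the duality scheme recalled in the Introduction for the divergence operator. Write $A^{*}(D)$ for the formal adjoint of $A(D)$ and recall that $L^{\infty}(\R^n,F^{*})$ is the dual of $L^{1}(\R^n,F)$. On the subspace $A(D)[\calD(\R^n,E)]\subseteq L^{1}(\R^n,F)$ consider the linear functional $T$ defined by $T(A(D)\varphi):=\int_{\R^n}\langle\varphi,d\mu\rangle$. If $T$ is well defined and bounded for the $L^{1}$ norm, then a Hahn--Banach extension of $T$ to $L^{1}(\R^n,F)$ is represented, via $L^{1}(\R^n,F)^{*}=L^{\infty}(\R^n,F^{*})$, by some $f\in L^{\infty}(\R^n,F^{*})$, and unwinding the pairing shows that $A^{*}(D)f=\mu$ in $\calD'(\R^n,E^{*})$. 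That $T$ is well defined follows from the ellipticity of $A(D)$: if $\varphi\in\calD(\R^n,E)$ and $A(D)\varphi=0$, then $A(\xi)\widehat\varphi(\xi)=0$ for every $\xi$, so $\widehat\varphi$ is supported at the origin and $\varphi$, being compactly supported, vanishes identically. Everything therefore reduces to the a priori estimate
\begin{equation}\label{eq.plan-apriori}
\left|\int_{\R^n}\langle\varphi,d\mu\rangle\right|\leq C\,\|A(D)\varphi\|_{L^{1}(\R^n,F)},\qquad\varphi\in\calD(\R^n,E),
\end{equation}
with $C$ depending only on $A(D)$ and on the constants implicit in \eqref{eq.BP-1}--\eqref{eq.BP-2}.

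To approach \eqref{eq.plan-apriori} I would start from the representation of $\varphi$ furnished by ellipticity: $A(\xi)^{*}A(\xi)$ is invertible for $\xi\neq0$ with inverse homogeneous of degree $-2m$, so $\widehat\varphi(\xi)=(A(\xi)^{*}A(\xi))^{-1}A(\xi)^{*}\widehat{A(D)\varphi}(\xi)$ and, the multiplier $(A(\xi)^{*}A(\xi))^{-1}A(\xi)^{*}$ being homogeneous of degree $-m$, one gets $\varphi=K\ast A(D)\varphi$ with $K$ a matrix of kernels smooth away from the origin and homogeneous of degree $m-n$; in particular $|\varphi(x)|\lesssim\int_{\R^n}|x-y|^{m-n}\,|A(D)\varphi(y)|\,dy$. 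Feeding this into the left-hand side of \eqref{eq.plan-apriori} and applying Tonelli's theorem bounds it by $\|A(D)\varphi\|_{L^{1}}$ times $\sup_{y}\int_{\R^n}|x-y|^{m-n}\,d|\mu|(x)$. This last supremum I would estimate by splitting the potential at the scale $|y|/2$: via the layer--cake formula the inner part is essentially $\int_{0}^{|y|/2}r^{m-n-1}|\mu|(B[y,r])\,dr$, hence $\lesssim1$ by \eqref{eq.BP-2}; and on $\{|x-y|\geq|y|/2\}$ the sub-part where $|x|\leq2|y|$ is $\lesssim|y|^{m-n}|\mu|(B[0,2|y|])\lesssim1$ by \eqref{eq.BP-1}, since there $|x-y|^{m-n}\asymp|y|^{m-n}$.

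The leftover sub-part $\int_{|x|>2|y|}|x-y|^{m-n}\,d|\mu|(x)$ is \emph{not} controlled by these two conditions at face value --- already \eqref{eq.BP-1} alone forces here a borderline logarithmic divergence, and one can in fact exhibit measures obeying \eqref{eq.BP-1}--\eqref{eq.BP-2} whose Riesz potential is identically $+\infty$ --- so this is where I expect the real work to lie and where the \emph{canceling} hypothesis becomes indispensable. The point is to drop the crude pointwise bound and exploit the structural constraint $\widehat{A(D)\varphi}(\xi)\in A(\xi)[E]$ together with $\bigcap_{\xi\neq0}A(\xi)[E]=\{0\}$: this is exactly the cancellation which, in the circle of ideas of Bourgain--Brezis and Van Schaftingen, turns the borderline bound for the Calder\'on--Zygmund--type kernel governing $D^{m-1}\varphi$ (for which the endpoint inequality $\|D^{m-1}\varphi\|_{L^{n/(n-1)}}\lesssim\|A(D)\varphi\|_{L^{1}}$ holds) into a convergent one, and which should allow one to reassemble the pieces through a Whitney/dyadic decomposition in which, at distance $\sim2^{j}$ from the origin, only the truncated potential of $\mu$ up to scale $\sim2^{j}$ intervenes --- once more the quantity tamed by \eqref{eq.BP-2}. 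Granting \eqref{eq.plan-apriori}, the duality argument of the first paragraph completes the proof.
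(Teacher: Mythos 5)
This statement is not proved in the present paper at all: it is quoted verbatim from \cite[Theorem~B]{BP}, so the only comparison available is with that external proof. Your set-up is the right one and matches the scheme the Introduction describes for the divergence operator: reduce to the a priori estimate $\left|\int\langle\varphi,d\mu\rangle\right|\leq C\|A(D)\varphi\|_{L^1}$ on the subspace $A(D)[\calD(\R^n,E)]$, check well-definedness via injectivity of the symbol, and conclude by Hahn--Banach and $L^1$--$L^\infty$ duality. That part is correct. You also correctly diagnose that the naive route through the fundamental solution fails: the kernel $K$ is homogeneous of degree $m-n$, and under \eqref{eq.BP-1}--\eqref{eq.BP-2} the potential $\int|x-y|^{m-n}\,d|\mu|(x)$ genuinely need not be finite (the tail $\int_{|y|/2}^{\infty}r^{m-n-1}|\mu|(B[y,r])\,dr$ only gives $\int r^{-1}\,dr$ from \eqref{eq.BP-1}). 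Identifying this obstruction is a real insight.

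But that is exactly where your proof stops being a proof. The entire content of the theorem beyond soft functional analysis is the a priori estimate in the regime you label ``leftover,'' and your treatment of it is a statement of intent (``this is where I expect the real work to lie,'' ``should allow one to reassemble the pieces,'' ``Granting \eqref{eq.plan-apriori}\dots''), not an argument. You invoke the canceling condition and the Bourgain--Brezis--Van Schaftingen circle of ideas, and you mention the endpoint bound $\|D^{m-1}\varphi\|_{L^{n/(n-1)}}\lesssim\|A(D)\varphi\|_{L^1}$, but you never exhibit the mechanism by which the structural constraint $\widehat{A(D)\varphi}(\xi)\in A(\xi)[E]$ together with $\bigcap_{\xi\neq 0}A(\xi)[E]=\{0\}$ converts the logarithmically divergent tail into a convergent quantity controlled by \eqref{eq.BP-1}--\eqref{eq.BP-2}; no decomposition is actually carried out, no estimate on any dyadic piece is proved, and the constant $C$ in \eqref{eq.plan-apriori} is never obtained. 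Since cancellation is a hypothesis of the theorem precisely because ellipticity alone is insufficient here, omitting this step means the proof of the theorem is missing its core; what you have is a correct reduction plus an accurate description of the difficulty, which is not the same as a proof.
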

Here $|\mu|$ denotes the total variation of the vector-valued Radon measure $\mu$. {Note also that, in the latter statement:
\begin{itemize}
    \item The assumption \eqref{eq.BP-1} is weaker than requiring ${|\mu|(B[x,r])}\leq C {r^{n-m}}$ for all $x\in\R^n$ and all $r>0$, since the supremum only extends, in \eqref{eq.BP-1}, to balls \emph{centered at the origin}.
    \item Any scalar Radon measure which satisfies, for each $x\in\R^n$ and each $0<r<\frac{|x|}{2}$:
\begin{equation} \label{mu x regularity}
	\nu(B[x,r]) \leq C_2 \, |x|^{-m} r^n,
\end{equation}
automatically satisfies \eqref{eq.BP-2}; hence \eqref{mu x regularity} is a stronger condition than \eqref{eq.BP-2}.
\item The integration boundary $|x|/2$ in \eqref{eq.BP-2} can be replaced by $a|x|$, where $0<a<1$ is any fixed constant~---~the holding of \eqref{mu x regularity} for any $x\in\R^n$ and $0<r < a|x|$ then being again stronger that the modified version of \eqref{eq.BP-2}.
\item An example of measure $\nu$ satisfying \eqref{eq.BP-1} and \eqref{eq.BP-2} is given by 
$\nu = |x|^{{-m}} \calL^n$ for $n\geq 2$, where $\calL^n$ denotes Lebesgue's outer measure in $\R^n$.
\item The canceling property appearing in the statement (and defined precisely below in \eqref{canceling}) is due to J. Van Schaftingen (see  \cite{VS}); it characterizes the validity of an $L^{1}$ Sobolev-Gagliardo-Nirenberg inequality for elliptic homogeneous differential operators, recovering several \emph{a priori} $L^{1}$ estimates for divergence-vector fields and chains of complexes.
\end{itemize}}

In this note we present a necessary condition for a compact set $S\subseteq\R^n$ to be removable for the equation $A^*(D)f=0$ associated to an elliptic and canceling homogeneous differential operator $A(D)$, using Theorem \ref{thm.BP} and a slightly improved version of Frostman's lemma.
Our main result is the following:

\begin{alphatheo}\label{thm.main}
Assume that $A(D)$ is an elliptic and canceling homogeneous differential operator on $\R^n$ of order $0<m<n$, from a finite-dimensional vector space $E$ to a finite-dimensional vector space $F$.
If the closed set $S\subseteq\R^n$ is removable for the equation $A^*(D) f=0$ in $L^\infty(\R^n,F^*)$, then $S$ has Hausdorff dimension less than or equal to $n-m$.
\end{alphatheo}


\begin{remark}
Since it follows from Harvey and Polking's result (see Theorem~\ref{thm.HP}) that if $S$ is $\calH^{n-m}$-negligible, then $S$ is removable for the equation $A^*(D)f=0$ w.r.t. $L^\infty$, it hence only remains to understand whether or not some sets with Hausdorff dimension $n-m$ yet positive $(n-m)$-dimensional Hausdorff measure, may be removable in this context.
\end{remark}

We shall organize the paper as follows. In Section~\ref{sec.Frostman}, we shall present a version of Frostman's Lemma with an additional power decay condition. In Section~\ref{sec.haus-dim}, we shall then recall precisely the context of elliptic and canceling operators, before proving our main Theorem~\ref{thm.main}.

\section{A ``Frostman's lemma'' with decay}\label{sec.Frostman}

{
Our goal in this section is to provide a result ensuring at least that, given integers $0<m<n$ and a closed set $S\subseteq \R^n$ satisfying $\calH^{n-m+\alpha}(S)>0$ for some $\alpha>0$, there exists a (nonnegative) non-trivial Radon measure supported in $S$ and satisfying conditions \eqref{eq.BP-1} and \eqref{eq.BP-2} above. This will result from observing that one can impose, in the statement of Frostman's Lemma, a decay condition; this is what the next result expresses.
}

\begin{lemma}[Frostman's Lemma with power weight decay]\label{lem.frost}
Assume that $0<\alpha<s<n$ are fixed and that $B\subseteq\R^n$ is a Borel set satisfying $\calH^s(B)>0$. Then there exists a non-zero scalar Radon measure $\mu$ supported in $B$ satisfying:
\begin{equation}\label{eq.cond-1}
\sup_{r>0} \frac{\mu(B[0,r])}{r^{s-\alpha}}<+\infty,
\end{equation}
and such that, for any $x\in\R^n$ and any $0<r<\frac 12 |x|$, one has:
\begin{equation}\label{eq.cond-2}
\mu(B[x,r])\lesssim |x|^{-\alpha} r^s.
\end{equation}
\end{lemma}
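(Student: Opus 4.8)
The plan is to start from the classical Frostman Lemma, which yields a non-zero Radon measure $\nu$ supported in $B$ with $\nu(B[x,r])\leq r^s$ for all $x\in\R^n$ and all $r>0$ (using $\calH^s(B)>0$; see \cite[Theorem~8.8]{MATTILA}). This measure already controls balls of radius $r$ by $r^s$, so the only thing missing is the extra power decay in $|x|$ recorded in \eqref{eq.cond-1} and \eqref{eq.cond-2}. To introduce that decay, I would simply multiply $\nu$ by a suitable weight. Since $B$ is Borel with $\calH^s(B)>0$, after intersecting with a large ball and translating I may assume without loss of generality that $B$ is contained in, say, $B[0,R_0]\setminus\{0\}$ for some $R_0>0$ — more precisely, I would first discard the single point $0$ (which is $\calH^s$-negligible as $s>0$) and then, since $\calH^s(B)>0$, find some dyadic annulus on which $\calH^s$ is still positive and restrict attention there; this ensures that on the support of the measure the quantity $|x|$ is comparable to a fixed positive constant, or at least bounded below and above.

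The key step is then to define $\mu := w\,\nu$ where $w(x) = \min\{1,|x|^{-\alpha}\}$, or even more simply $w(x)=|x|^{-\alpha}$ after the reduction above guarantees $|x|$ is bounded below on $\mathrm{supp}\,\nu$ (so that $w$ is bounded and $\mu$ is a genuine finite non-zero Radon measure supported in $B$). To verify \eqref{eq.cond-2}, fix $x\in\R^n$ and $0<r<\tfrac12|x|$. For any $y\in B[x,r]$ one has $|y|\geq |x|-r>\tfrac12|x|$, hence $w(y)\lesssim |x|^{-\alpha}$ on $B[x,r]$; integrating the bound $\nu(B[x,r])\leq r^s$ against this pointwise estimate on the weight gives $\mu(B[x,r]) = \int_{B[x,r]} w\,d\nu \lesssim |x|^{-\alpha}\, r^s$, which is exactly \eqref{eq.cond-2}. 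For \eqref{eq.cond-1}, fix $r>0$ and split $B[0,r]$ into the part where $|y|\leq \tfrac r2$ — no, more cleanly: on $B[0,r]$ one has $w(y)\leq |y|^{-\alpha}$, and I would estimate $\mu(B[0,r]) = \int_{B[0,r]} |y|^{-\alpha}\,d\nu(y)$ by a dyadic decomposition of $B[0,r]$ into annuli $A_j = \{2^{-j-1}r<|y|\leq 2^{-j}r\}$, $j\geq 0$, on each of which $|y|^{-\alpha}\approx (2^{-j}r)^{-\alpha}$ and $\nu(A_j)\leq \nu(B[0,2^{-j}r])\leq (2^{-j}r)^s$; summing yields $\mu(B[0,r]) \lesssim \sum_{j\geq 0} (2^{-j}r)^{s-\alpha} \lesssim r^{s-\alpha}$ since $s-\alpha>0$ makes the geometric series converge. (If instead I keep the reduction that $|x|$ is bounded below on the support, \eqref{eq.cond-1} is immediate since $\mu$ is finite and $r^{s-\alpha}$ is bounded below for $r$ bounded below, while for small $r$ the ball $B[0,r]$ misses the support entirely.)

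The only real subtlety — and the step I expect to require the most care — is making sure the weighting does not kill the measure: I must guarantee $\mu\neq 0$, which is why the reduction to an annulus (or to $B$ away from the origin) matters, as $w$ is strictly positive and bounded there so $\mu(B)=\int_B w\,d\nu>0$. Everything else is the routine dyadic summation above, using in an essential way the strict inequality $\alpha<s$ to sum the geometric series in the proof of \eqref{eq.cond-1}, and the elementary observation $|y|\geq|x|-r>\tfrac12|x|$ on $B[x,r]$ when $r<\tfrac12|x|$ for \eqref{eq.cond-2}.
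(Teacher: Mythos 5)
Your proposal is correct, but it takes a genuinely different route from the paper. The paper also starts from Frostman's measure $\nu$ with $\nu(B[x,r])\leq r^s$, but instead of multiplying by the power weight it sets $\mu=\sum_{k\geq 0}2^{-k\alpha}\,\nu\hel A_k$ with $A_k=\{k\leq |x|<k+1\}$, i.e.\ it uses a piecewise-constant weight comparable to $2^{-\alpha|x|}$; this decays \emph{exponentially} in $|x|$, which is stronger than needed, and the stated bound $|x|^{-\alpha}r^s$ is then recovered from the elementary inequality $(j+1)2^{-j/2}\leq \tfrac32$. Condition \eqref{eq.cond-1} is checked there by summing $\sum_k 2^{-k\alpha}(k+1)^s$ over the annuli meeting $B[0,r]$. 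Your weight $w(y)=|y|^{-\alpha}$ (capped or not) is the more natural choice matching the statement exactly, and your verification of \eqref{eq.cond-1} by dyadic decomposition of $B[0,r]$ near the origin, with the geometric series converging precisely because $s-\alpha>0$, is correct; so is the observation that $|y|>\tfrac12|x|$ on $B[x,r]$ when $r<\tfrac12|x|$, which gives \eqref{eq.cond-2}. Two small remarks. First, your reduction to an annulus is legitimate (countable subadditivity of $\calH^s$ gives a dyadic shell of $B$ with positive $\calH^s$-measure, and $\calH^s(\{0\})=0$ since $s>0$), but it is not actually needed: with $w(y)=|y|^{-\alpha}$ one has $\nu(\{0\})=0$ (from $\nu(B[0,\rho])\leq\rho^s$), hence $w>0$ $\nu$-a.e.\ and $\mu\neq 0$; local finiteness of $\mu$ near the origin follows from the same dyadic sum, and \eqref{eq.cond-1} then holds for \emph{all} $r>0$ directly from that computation. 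Second, if you do keep the cap $w=\min\{1,|y|^{-\alpha}\}$ without restricting the support, \eqref{eq.cond-1} still holds because $w\leq |y|^{-\alpha}$ pointwise, so the uncapped estimate dominates. In short: your argument is sound and arguably cleaner than the paper's; what the paper's exponential weight buys is that both conditions reduce to convergence of explicit series over unit annuli, at the cost of introducing a decay much faster than the one actually claimed.
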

\begin{proof}
Start by using Frostman's Lemma \cite[Theorem~8.8]{MATTILA} to find a non-zero (nonnegative) Radon measure $\nu$ supported in $B$ satisfying $\nu(B[x,r])\leq r^s$ for all $x\in\R^n$ and all $r>0$. Now define $A_k:=\{x\in\R^n: k\leq |x|<k+1\}$ for $k\in\N$ and introduce the Radon measure $\mu$ defined by:
$$
\mu:=\sum_{k=0}^\infty 2^{-k\alpha} \nu\hel A_k.
$$

Observe first that for $0<r<1$ one has:
$$
\frac{\mu(B[0,r])}{r^{s-\alpha}}=\frac{\nu(B[0,r])}{r^{s-\alpha}} \leq \frac{r^s}{r^{s-\alpha}}=r^\alpha\leq 1,
$$
while if one has $j\leq r<j+1$ for some $j\in\N^*$ there holds:
$$
\mu(B[0,r])\leq \sum_{k=0}^{j-1} 2^{-k\alpha} \nu(B[0,k+1])+2^{-j\alpha} \nu(B[0,r])\leq \sum_{k=0}^{j-1} 2^{-k\alpha} (k+1)^s+2^{-j\alpha} r^s,
$$
and hence:
\begin{multline*}
\frac{\mu(B[0,r])}{r^{s-\alpha}}\leq \frac{1}{r^{s-\alpha}}\sum_{k=0}^{j-1} 2^{-k\alpha} (k+1)^s+2^{-j\alpha} r^\alpha\\ \leq \sum_{k=0}^{j-1} 2^{-k\alpha} (k+1)^s +[2^{-j}(j+1)]^\alpha\leq C_{\alpha,s}<+\infty,
\end{multline*}
with for example $C_{\alpha,s}:=1+\sum_{k=0}^\infty 2^{-k\alpha} (k+1)^s$ since one has $(j+1)2^{-j}\leq 1$ for all $j\in\N^*$. This finishes to establish that \eqref{eq.cond-1} holds.\\

To prove \eqref{eq.cond-2}, fix $x\in\R^n$ and $0<r\leq\frac{|x|}{2}$. Choosing $j\in\N$ such that one has $j\leq |x|<j+1$, one finds $r\leq \frac{j+1}{2}$ and hence also, for $y\in B[x,r]$:
$$
|y|\geq |x|-|x-y|\geq j-\frac{j+1}{2}=\frac{j-1}{2}\quad\text{and}\quad |y|\leq |x|+|y-x|<j+1+\frac{j+1}{2}=\frac 32 (j+1),
$$
so that there holds $B[x,r]\cap A_k=\emptyset$ for $k<m_j:=\left\lfloor \frac{j-1}{2}\right\rfloor$ and $k>n_j:=\left\lceil \frac 32 (j+1)\right\rceil$.

We can hence compute:
\begin{equation}\label{eq.calc-1}
\mu(B[x,r])\leq \sum_{k=m_j}^{n_j} 2^{-k\alpha} \nu(B[x,r])\leq r^s \sum_{k=m_j}^{n_j} 2^{-k\alpha}.
\end{equation}
Yet one has:
\begin{multline}\label{eq.calc-2}
\sum_{k=m_j}^{n_j} 2^{-k\alpha}  = 2^{-m_j\alpha} \sum_{k=0}^{n_j-m_j} 2^{ -k\alpha}= 2^{-m_j\alpha}\frac{1-2^{-[1+(n_j-m_j)]\alpha}}{1-2^{-\alpha}}\\=\frac{2^{-m_j\alpha}-2^{-(n_j+1)\alpha}}{1-2^{-\alpha}}\leq \frac{1}{1-2^{-\alpha}} 2^{-m_j\alpha}\leq \frac{1}{1-2^{-\alpha}} 2^{-\left(\frac{j-1}{2}-1\right)\alpha)}=\frac{2^{\frac 32 \alpha}}{1-2^{-\alpha}} 2^{-\frac{j}{2}\alpha}.
\end{multline}
Writing then:
\begin{equation}\label{eq.calc-3}
 2^{-\frac{j}{2}\alpha} = |x|^{-\alpha} \left(\frac{|x|}{2^{\frac j2 }}\right)^\alpha\leq  |x|^{-\alpha} \left(\frac{j+1}{2^{\frac j2 }}\right)^\alpha
\leq \left(\frac 32 \right)^\alpha |x|^{-\alpha},
\end{equation}
since one has $\frac{k+1}{2^{\frac k2}}\leq \frac 32$ for any $k\in\N$, we finally get, combining \eqref{eq.calc-1}, \eqref{eq.calc-2} and \eqref{eq.calc-3}:
$$
\mu(B[x,r])\leq \frac{ 2^{\frac{\alpha}{2}}\cdot 3^\alpha}{1-2^{-\alpha}}\cdot |x|^{-\alpha} r^s,
$$
which establishes \eqref{eq.cond-2}.
\end{proof}

\section{Hausdorff dimension of removable sets for elliptic and canceling homogeneous operators}\label{sec.haus-dim}

Let $A(D)$ be a homogeneous linear differential operator on $\R^n$ of order $0<m<n$, from a finite-dimensional complex vector space $E$ to a finite dimensional complex vector space $F$, \emph{i.e.} an operator of the form:
$$
A(D)=\sum_{|\alpha|=m} c_\alpha \partial^\alpha : {\calD(\R^n,E)}\to {\calD(\R^n,F)},
$$
where $c_\alpha\in\calL(E,F)$ is a linear operator from $E$ to $F$, for each $|\alpha|=m$. Here, $\calD(\R^n,X)$ stands for the set of all smooth functions with compact support defined on $\R^n$ with values in a finite dimensional complex vector space $X$ (itself endowed with a fixed norm).

Recall that one associates to $A(D)$ its \emph{symbol} $A(\xi):E\to F$ defined by:
$$
A(\xi):=\sum_{|\alpha|=m} c_\alpha \xi^\alpha, \quad \xi \in \R^{n}.
$$
We then say that $A(D)$ is:
\begin{itemize}
\item[(i)] \emph{elliptic} in case its symbol $A(\xi)$ is injective for any $\xi\in\R^n\setminus \{0\}$;
\item[(ii)] \emph{canceling} in case one has:
\begin{equation}\label{canceling}
\bigcap_{\xi\in\R^n\setminus\{0\}} A(\xi)[E]=\{0\}.
\end{equation}
\end{itemize}
We also denote by $A^*(D):\calD(\R^n,F^*)\to \calD(\R^n,E^*)$ the formal adjoint of $A$.\\

An important class of operators satisfying (i) and (ii) is given by the gradient operator 
$A(D)=-\nabla$, where $E=\R$ {and} $F=\R^n$. Clearly the operator is elliptic, since its {symbol} is $A(\xi)=\xi$, and canceling for 
$n\geq 2$ since one has:
$$\bigcap_{\xi\in\R^n\setminus\{0\}} A(\xi)[E]= \bigcap_{\xi\in\R^n\setminus\{0\}} \xi \cdot \R = \{0\}.$$ 
{Note that $A^{*}(D)=\diver$. Then, for a positive (scalar) measure $\mu=|\mu|$ satisfying \eqref{eq.BP-1} and \eqref{eq.BP-2}, Theorem \ref{thm.BP} gives a solution $f \in L^{\infty}(\R^n,\R^n)$ for the equation \eqref{ad}.}\\  

We are now ready to prove our main Theorem~\ref{thm.main}.

\subsection{The proof of Theorem \eqref{thm.main}}

If the Hausdorff dimension of $S$ were larger than $n-m$, then there would exist $\alpha>0$ such that $\calH^{n-m+\alpha}(S)>0$. The above {Frostman Lemma} with power decay~---~Lemma~\ref{lem.frost}~---~applied to $B:=S$ and $s:=n-m+\alpha$ ensures the existence of a non-zero Radon measure supported in $S$ satisfying \eqref{eq.BP-1} 
and such that, for any $x\in\R^n$ and any $0<r<\frac 12 |x|$, one has:
$$
\mu(B[x,r])\lesssim |x|^{-\alpha} r^{n-m+\alpha}.
$$
Yet then, if $e\in E^*$ is fixed, and if one defines $\mu_e(B):=\mu(B) e$ for any $B\subseteq\R^n$, there holds:
$$
\sup_{r>0} \frac{|\mu_e|(B[0,r])}{r^{n-m}}=\|e\|_{E^\ast}\sup_{r>0} \frac{\mu(B[0,r])}{r^{n-m}}<+\infty,
$$
meaning that \eqref{eq.BP-1} above is fullfilled.

We also get, for any $x\in\R^n$, $x\neq 0$:
$$
\int_0^{\frac{|x|}{2}} \frac{|\mu_e|(B[x,r])}{r^{n-m+1}}\,dr=\int_0^{\frac{|x|}{2}} \frac{\mu(B[x,r])}{r^{n-m+1}}\,dr\lesssim |x|^{-\alpha}\int_0^{\frac{|x|}{2}} r^{\alpha-1}\,dr\leq \frac{1}{2^\alpha \alpha},
$$
so that \eqref{eq.BP-2} is also satisfied uniformly in $x\in\R^n$, $x\neq 0$.

Hence it follows from Theorem~\ref{thm.BP} that there exists $f\in L^\infty(\R^n,F^*)$ solving $A^*(D)f=\mu$~---~which implies that $S$ is \emph{not} removable for the equation $A^*(D) f=0$, since one has $A^*(D)f=0$ outside $S$ (in the sense of distributions) but $A^*(D)f=\mu\neq 0$ in $\R^n$ (in the sense of distributions).

\bibliographystyle{plain}
\bibliography{BMP}

\begin{thebibliography}{10}

\bibitem{BP}
Victor Biliatto and Tiago Picon.
\newblock A note on lebesgue solvability of elliptic homogeneous linear
  equations with measure data.
\newblock {\em J. Geom. Anal.}, 34, 22 (2024).
  https://doi.org/10.1007/s12220-023-01457-w.

\bibitem{CALDERON}
A.-P. Calder\'{o}n.
\newblock Cauchy integrals on {L}ipschitz curves and related operators.
\newblock {\em Proc. Nat. Acad. Sci. U.S.A.}, 74(4):1324--1327, 1977.

\bibitem{DM}
Guy David and Pertti Mattila.
\newblock Removable sets for {L}ipschitz harmonic functions in the plane.
\newblock {\em Rev. Mat. Iberoamericana}, 16(1):137--215, 2000.

\bibitem{DP}
Thierry De~Pauw.
\newblock On the exceptional sets of the flux of a bounded vectorfield.
\newblock {\em J. Math. Pures Appl. (9)}, 82(9):1191--1217, 2003.

\bibitem{HP}
Reese Harvey and John Polking.
\newblock Removable singularities of solutions of linear partial differential
  equations.
\newblock {\em Acta Math.}, 125:39--56, 1970.

\bibitem{MATTILA}
Pertti Mattila.
\newblock {\em Geometry of sets and measures in {E}uclidean spaces}, volume~44
  of {\em Cambridge Studies in Advanced Mathematics}.
\newblock Cambridge University Press, Cambridge, 1995.
\newblock Fractals and rectifiability.

\bibitem{M2006}
Laurent Moonens.
\newblock Removable singularities for the equation {${\rm div}\,v=0$}.
\newblock {\em Real Anal. Exchange}, pages 125--132, 2006.

\bibitem{NTV}
Fedor Nazarov, Xavier Tolsa, and Alexander Volberg.
\newblock The {R}iesz transform, rectifiability, and removability for
  {L}ipschitz harmonic functions.
\newblock {\em Publ. Mat.}, 58(2):517--532, 2014.

\bibitem{PT}
Nguyen~Cong Phuc and Monica Torres.
\newblock Characterizations of the existence and removable singularities of
  divergence-measure vector fields.
\newblock {\em Indiana Univ. Math. J.}, 57(4):1573--1597, 2008.

\bibitem{VS}
J.~Van~Schaftingen.
\newblock Limiting sobolev inequalities for vector fields and canceling linear
  differential operators.
\newblock {\em Eur. Math. Soc.}, 3(15):877--921, 2013.

\end{thebibliography}
\end{document}